\documentclass[11pt]{article}

\usepackage{amsfonts,amsthm,amsmath}
\usepackage{graphics}
\usepackage{lpic}

\newcommand{\len}{\mathrm{len}}
\newcommand{\rank}{\mathrm{rank}}

\newcommand{\hh}{\mathbb{H}^3}

\newtheorem{lem}{Lemma}
\newtheorem{prop}{Proposition}
\newtheorem{thm}{Theorem}
\newtheorem{cor}{Corollary}

\title{A note on the uniqueness of minimal length carrier graphs}
\author{Michael Siler}

\begin{document}
\maketitle
\begin{abstract}
\noindent We show that minimal length carrier graphs are not unique, but if $M$
is in a large class of hyperbolic 3-manifolds, including the geometrically
finite ones, then $M$ has only finitely many minimal length carrier graphs and
no two of them are homotopic. As a corollary, we obtain a new proof that the
isometry group of a geometrically finite 3-manifold is finite.
\end{abstract}

Let $M$ be a hyperbolic 3-manifold. A \emph{carrier graph} for $M$ is a finite
graph $X$ along with a map $f\!:\!X\to M$ such that
$f_{\ast}\!:\!\pi_1(X)\to\pi_1(M)$ is surjective. We will assume that
$\pi_1(M)$ is finitely generated, and it is then clear that a carrier graph for
$M$ exists. We will only consider carrier graphs with
$\rank(\pi_1(X))=\rank(\pi_1(M))$. The length of an edge $e$ of $X$,
$\len_f(e)$, is the length of the path $f|_e$ (we assume $f$ takes edges to
rectifiable paths in $M$), and $\len_f(X)$ is the sum of the lengths of the
edges of $X$. A \emph{minimal length carrier graph} is a carrier graph with
length less than or equal to the length of any other (minimal rank) carrier graph for $M$.
In~\cite{W}, White showed that if $M$ is closed, then it has a minimal length
carrier graph, and in~\cite{S}, it is shown how to extend White's argument to a
much larger class of hyperbolic 3-manifolds. Minimal length carrier graphs have
nice geometric properties and were first used by White~\cite{W} to prove that if
$M$ is closed, then it has a nontrivial loop whose length is bounded above in
terms of nothing more than the rank of $\pi_1(M)$. They have subsequently been
used, for example, to show that rank equals Heegaard genus for large classes of
hyperbolic 3-manifolds in~\cite{So}, \cite{B}, and \cite{NS}.

Following Souto~\cite{So}, we say that two carrier graphs $f\!:\!X\to M$ and
$g\!:\!Y\to M$ are \emph{equivalent} if there exists a homotopy equivalence
$\eta\!:\!X\to Y$ so that $f$ and $g\eta$ are freely homotopic. We will say that
$f$ and $g$ are
\emph{strongly equivalent} if there is a homeomorphism $\eta\!:\!X\to Y$ such
that $f=g\eta$. In this
note, we consider the following two uniqueness questions:
\begin{enumerate}
\item Must any two minimal length carrier graphs for $M$ be strongly equivalent?
\item Must any two carrier graphs which both have minimal length within the
same equivalence class be strongly equivalent?
\end{enumerate}

The answer to both questions is no, according to the examples in
Section~\ref{example}. However, we will prove two weaker
uniqueness results in Section~\ref{theorems}. In order to state the results,
we need one more (very strong) notion of equivalence. Two carrier graphs
$f,g\!:\!X\to M$ are \emph{essentially equivalent} if $f=g\eta$, where
$\eta\!:\!X\to X$ is a homeomorphism that fixes vertices and leaves edges and
their orientations invariant. In other words, $f$ and $g$ are the same except
for reparameterizing the edges. Carrier graphs are \emph{essentially distinct}
if they are not essentially equivalent.

\begin{thm}\label{main_thm} Let $M$ be a hyperbolic 3-manifold and let
$f\!:\!X\to M$ and $g\!:\!X\to M$ be carrier graphs, which either each have
minimal length within their equivalence classes or each have minimal length
globally. If $f$ and $g$ are homotopic, then they are essentially equivalent.
\end{thm}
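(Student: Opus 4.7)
The plan is to lift to the universal cover $\hh = \tilde M$ and exploit the convexity of the length functional together with the (essentially strict) convexity of the distance function in negative curvature. First I would fix equivariant lifts $\tilde f,\tilde g:\tilde X\to\hh$ of the two homotopic maps, under a common representation $\rho:\pi_1(X)\to\pi_1(M)$, and observe that minimality together with the uniqueness of geodesic representatives in $\hh$ forces $\tilde f$ and $\tilde g$ to send each edge to a geodesic arc. For each vertex $v$ of $\tilde X$, let $\alpha_v:[0,1]\to\hh$ be the constant-speed geodesic from $\tilde f(v)$ to $\tilde g(v)$, extended equivariantly. Straightening each edge to the unique geodesic joining its (moving) endpoints yields an equivariant family $\hat G_t:\tilde X\to\hh$ of maps interpolating between $\tilde f$ and $\tilde g$, each descending to a carrier graph in the equivalence class of $f$ and $g$.

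Because $\hat G_t$ lies in the common equivalence class, its length $L(t)=\len(\hat G_t)$ satisfies $L(t)\ge \ell_{\min}$, where $\ell_{\min}$ is the common minimum. On the other hand each edge length $\ell_e(t)=d(\alpha_v(t),\alpha_w(t))$ is convex in $t$ in the CAT$(-1)$ space $\hh$, hence $L(t)$ is convex; combined with $L(0)=L(1)=\ell_{\min}$, convexity also gives $L(t)\le\ell_{\min}$. Therefore $L(t)\equiv\ell_{\min}$, and since a sum of convex functions equals an affine function only when each summand is itself affine, every $\ell_e(t)$ is affine on $[0,1]$.

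The crux is converting this affine behavior into rigidity of vertex positions. In $\hh$, the distance between two points moving along geodesic paths is strictly convex unless both paths are contained in a single geodesic line of the ambient space. Hence for each edge $e=(v,w)$ the images of $\alpha_v$ and $\alpha_w$ lie on a common geodesic line $L_e\subset\hh$. If some $\alpha_v$ is non-constant, then $L_e$ is uniquely determined by the arc $\alpha_v([0,1])$, and every edge incident to $v$ is forced to use the same line; by connectedness of $\tilde X$ and equivariance of the construction, all $\alpha_w$ then lie on a single line $L$, and $\rho(\pi_1(X))=\pi_1(M)$ stabilizes $L$, making $\pi_1(M)$ virtually cyclic. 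Setting aside this elementary degeneracy, every $\alpha_v$ must be constant, so $\tilde f$ and $\tilde g$ agree on every vertex.

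Once vertex positions coincide, each pair $\tilde f|_e,\tilde g|_e$ is a geodesic arc joining the same two points of $\hh$ in the same homotopy class rel endpoints, so uniqueness of hyperbolic geodesics implies the two maps traverse the same oriented arc and differ only by an orientation-preserving reparameterization of the interval. These reparameterizations assemble into the required homeomorphism $\eta:X\to X$ fixing each vertex and each oriented edge with $f=g\eta$, which is exactly essential equivalence. The main obstacle is the third paragraph: rigorously verifying the ``affine edge length forces a common geodesic line'' step (including degenerate cases where one endpoint is stationary, or where the common line passes through a vertex), and ruling out the elementary-group configuration cleanly so that the argument applies to all hyperbolic 3-manifolds of interest.
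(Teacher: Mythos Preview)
Your interpolation-plus-convexity strategy is sound and is a legitimate alternative to the paper's argument. Both exploit convexity of distance in negative curvature: the paper does so via an explicit midpoint construction together with a trigonometric lemma about midpoints of hyperbolic triangles, while you invoke the general CAT$(-1)$ fact that $t\mapsto d(\alpha(t),\beta(t))$ is convex along constant-speed geodesics and deduce that each edge-length $\ell_e(t)$ must be affine. Working upstairs in $\tilde X$ also buys you something real: it eliminates loop edges and with them the paper's separate orientation argument at the end.

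The gap is in your third paragraph, and it is not merely a matter of checking degenerate cases. From affineness of $\ell_e$ you correctly get that $\alpha_v$ and $\alpha_w$ lie on a common geodesic line $L_e$, and if $\alpha_v$ is non-constant this line is forced to be the unique $L$ through $\alpha_v$, so every neighbor $w$ of $v$ has $\alpha_w\subset L$. But the step ``by connectedness of $\tilde X$ \dots\ all $\alpha_w$ lie on a single line $L$'' fails the moment you reach a neighbor $w$ with $\alpha_w$ constant: a single point determines no line, so for the next edge $e'=(w,u)$ you learn only that $\alpha_u$ lies on \emph{some} line through the point $\tilde f(w)=\tilde g(w)$, not that it lies on $L$. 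Connectedness alone cannot push the argument past such a vertex, and the virtually-cyclic conclusion is unjustified.

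The missing ingredient is exactly the one the paper uses: White's result that edges of a minimal length carrier graph meet at angle $2\pi/3$ at every vertex. With it, no propagation is needed. At your vertex $v$ with $\alpha_v$ non-constant you already have $\tilde f(v)\in L$ and $\tilde f(w)\in L$ for every neighbor $w$, so all three $\tilde f$-edges at $v$ are segments of the single geodesic line $L$; their mutual angles are then $0$ or $\pi$, contradicting $2\pi/3$. This is precisely how the paper obtains strictness: it finds a vertex where the homotopy track $H_v$ is nontrivial, observes that equality in its midpoint estimate would force $H_v$ to make angle $0$ or $\pi$ with \emph{every} incident $f$-edge, and uses the $2\pi/3$ angles to rule that out. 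After this repair your argument and the paper's are parallel in structure, with your CAT$(-1)$ convexity replacing the paper's explicit midpoint lemma.
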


And although there may be more than one carrier graph of minimal length globally
or within an equivalence class, we show

\begin{thm}\label{finite}
Let $M$ be a hyperbolic 3-manifold that does not have a simply-degenerate,
$\pi_1$-surjective NP-end. Then $M$ has only finitely many essentially distinct
minimal length carrier
graphs, and each equivalence class of carrier graphs can have only finitely many
essentially distinct minimal length representatives.
\end{thm}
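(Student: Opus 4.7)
The strategy is to combine Theorem~\ref{main_thm} (uniqueness up to essential equivalence within a free homotopy class) with a compactness argument that traps every minimal length carrier graph in a fixed compact piece of $M$. The remaining work is a counting exercise: finitely many combinatorial types of $X$, finitely many free homotopy classes of maps per combinatorial type, and at most one essentially distinct minimal length representative per class.

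I would first show that a minimal length carrier graph $f\!:\!X\to M$ may be assumed to have no vertex of degree less than $3$. A pendant edge can be deleted without changing $\rank(\pi_1(X))$, contradicting minimality; at a degree-$2$ vertex the two local geodesic edges must meet at angle $\pi$, so the vertex may be absorbed into a single edge. Since $\sum_v \deg(v)=2|E(X)|$, this bounds $|V(X)|$ and $|E(X)|$ in terms of $\rank(\pi_1(M))$ alone, leaving only finitely many isomorphism types of $X$. Next, the end hypothesis is used to produce a compact set $K\subset M$ containing every minimal length carrier graph. By the methods of~\cite{S} extending White's existence proof, the only failure mode for a minimizing sequence of carrier graphs is to escape into an end of $M$ of the excluded type, so the hypothesis forces the minima to be attained in a common compact piece. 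Let $L$ denote the common total length of minimal carrier graphs.

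With the graphs confined to $K$ and their lengths bounded by $L$, fix a combinatorial type $X$, a spanning tree $T\subset X$, and a basepoint. A free homotopy class of $f\!:\!X\to M$ with image in $K$ and length at most $L$ is determined by the conjugacy classes in $\pi_1(M)$ of the loops obtained from the edges of $X\setminus T$ together with paths in $T$; each of these is a loop of length at most $L$ based in $K$. Proper discontinuity of the $\pi_1(M)$-action on $\mathbb{H}^3$ gives only finitely many such conjugacy classes, hence only finitely many free homotopy classes of candidate $f$. Theorem~\ref{main_thm} then supplies at most one essentially distinct minimal length representative per free homotopy class, yielding the first statement. For the second statement, the same count applies inside a fixed equivalence class $\mathcal{E}$: its minimal length representatives share a common length and lie in $K$, and the combinatorial and free-homotopy-class counts restricted to $\mathcal{E}$ remain finite.

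The main obstacle is the compact containment step --- identifying which families of carrier graphs can escape to infinity and verifying that the excluded end hypothesis rules out exactly these. Once that is in hand, everything else is bookkeeping using Theorem~\ref{main_thm} and the properness of the $\pi_1(M)$-action on $\mathbb{H}^3$.
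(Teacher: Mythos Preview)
Your architecture---finitely many combinatorial types, a compact trap $K$ via the end hypothesis, then Theorem~\ref{main_thm} to kill duplicates within a homotopy class---matches the paper's. The difference is in how you extract finiteness of homotopy classes from the compactness. The paper packages this as Proposition~\ref{prop}: given a length bound $L$, an infinite sequence of essentially distinct minimal-in-class carrier graphs is trivalent (hence, after passing to a subsequence, of a fixed combinatorial type), has image in a common compact set, and is equicontinuous; Arzel\`a--Ascoli then yields a uniformly convergent subsequence, so two distinct members are eventually homotopic, contradicting Theorem~\ref{main_thm}. Your route instead counts directly: with basepoints in $K$ and total length $\le L$, proper discontinuity of the $\pi_1(M)$-action on $\mathbb{H}^3$ bounds the number of homomorphisms $\pi_1(X)\to\pi_1(M)$ up to conjugacy, hence the number of free homotopy classes. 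Both are valid; the paper's Arzel\`a--Ascoli argument is slicker to state, while your counting argument is more explicit and makes clearer exactly which finiteness is being used. One small imprecision: the free homotopy class of $f$ is determined by the \emph{tuple} of loop-edge images up to a single simultaneous conjugation, not by their individual conjugacy classes---but the finiteness conclusion is unaffected, since there are only finitely many elements of $\pi_1(M)$ represented by loops of bounded length based in $K$.
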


The fact that a manifold satisfying the hypotheses of this theorem has a
minimal length carrier is proved in~\cite{S}. In Section~\ref{sec:application},
we will look at the action of the isometry group $\mathrm{Isom}(M)$ of $M$ on
the set of minimal length carrier graphs and derive a new proof of the following
fact:

\begin{cor}\label{finite_isom}
If $M$ does not have a simply-degenerate, $\pi_1$-surjective NP-end, then
$\mathrm{Isom}(M)$ is finite.
\end{cor}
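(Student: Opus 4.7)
The plan is to let $\mathrm{Isom}(M)$ act on the (finite) set of essential equivalence classes of minimal length carrier graphs supplied by Theorem~\ref{finite}, and then bound the kernel of the resulting permutation representation.

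First I would verify that composition with $\phi\in\mathrm{Isom}(M)$ preserves the minimal length condition (since $\phi$ is length-preserving) and respects essential equivalence, so that $\phi\cdot[f]:=[\phi\circ f]$ defines an action on the set $\mathcal{C}$ of essentially distinct minimal length carrier graphs. Since $|\mathcal{C}|<\infty$ by Theorem~\ref{finite}, the resulting homomorphism $\rho\!:\!\mathrm{Isom}(M)\to\mathrm{Sym}(\mathcal{C})$ has finite image, and it suffices to bound $K=\ker\rho$.

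Next I would fix a minimal length carrier graph $f\!:\!X\to M$ and unpack the condition $\phi\in K$. By the definition of essential equivalence there is a homeomorphism $\eta\!:\!X\to X$ fixing vertices and preserving each oriented edge, such that $\phi\circ f=f\circ\eta$. In particular $\phi$ fixes each vertex image $f(v)$. Because minimality forces $f|_e$ to parameterize a geodesic arc, and $\eta$ preserves endpoints and orientation of $e$, the isometry $\phi$ must send the arc $f(e)$ to itself with both endpoints fixed and orientation preserved; any such isometry of a geodesic arc is the identity on the arc. Thus every $\phi\in K$ fixes the subset $f(X)$ of $M$ pointwise.

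Finally I would invoke the fact that the fixed set of a non-identity isometry of a hyperbolic 3-manifold is a totally geodesic submanifold of dimension at most $2$. When $\pi_1(M)$ is non-elementary, $f(X)$ cannot lie in a single geodesic (since $f_*\pi_1(X)=\pi_1(M)$), so either $\phi$ is the identity or $f(X)$ lies in a totally geodesic surface $S\subset M$ and $\phi$ is the reflection through $S$; in particular $|K|\le 2$. Combined with the finiteness of $\rho(\mathrm{Isom}(M))$ this yields $|\mathrm{Isom}(M)|\le 2|\mathcal{C}|!<\infty$.

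The main obstacle I anticipate is the step in which we conclude that $\phi$ acts as the identity on each edge image. The presence of the reparameterization $\eta$ means $\phi\circ f|_e=f\circ\eta|_e$ is a possibly non-geodesic reparameterization of $f(e)$, and one must argue carefully, especially for loop edges where the two endpoints coincide, that an orientation-preserving self-isometry of a geodesic arc fixing both endpoints really must be the identity. A subsidiary concern is the exceptional case in which $f(X)$ is contained in a totally geodesic surface, but this only introduces the single possible extra involution and therefore does not affect the finiteness conclusion.
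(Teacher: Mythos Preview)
Your overall architecture---let $\mathrm{Isom}(M)$ act on the finite set $\mathcal{C}$ of essential equivalence classes of minimal length carrier graphs and then bound the kernel $K$---is exactly the paper's strategy. The difference lies in how the kernel is controlled.

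The paper picks a single vertex $v$, observes that $\phi\in K$ fixes $f(v)$ and the three edge-germs there, passes through an auxiliary map $K\to S_3$, and then uses White's $2\pi/3$ angle condition to see that the three initial tangent vectors span a $2$-plane in $T_{f(v)}M$; an isometry fixing a point and a tangent $2$-plane is then declared to be the identity. You instead argue that every $\phi\in K$ fixes the entire image $f(X)$ pointwise, and then invoke the fact that the fixed locus of a nontrivial isometry is totally geodesic of dimension at most $2$, together with the observation that $f(X)$ cannot sit inside a single geodesic, to conclude $|K|\le 2$.

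Your route is a genuine variant, and in one respect it is more careful: the paper's last step (``fixes a hyperplane pointwise $\Rightarrow$ identity'') overlooks the possible reflection in that hyperplane, which you account for explicitly with the bound $|K|\le 2$. Conversely, the paper's use of the $2\pi/3$ angles at a trivalent vertex gives an immediate reason why the relevant tangent directions span a plane; your argument needs this too (to pin down a unique totally geodesic surface $S$ and hence a unique possible reflection), so it is worth making that step explicit rather than relying only on the $\pi_1$-surjectivity observation.

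The obstacle you flag about loop edges is real but benign: lifting $f|_e$ and the chosen lift $\tilde\phi$ fixing $\tilde f(0)$, the relation $\phi\circ f|_e=f\circ\eta|_e$ forces $\tilde\phi\circ\tilde f=\tilde f\circ\eta$, so $\tilde\phi$ sends the lifted geodesic segment to itself fixing both (now distinct) endpoints, and hence is the identity on it. With that detail filled in, your argument is complete.
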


Note that this corollary applies, for example, to all geometrically finite
3-manifolds.

The author would like to thank Peter Shalen for helpful guidance and Benson Farb
for pointing out the above corollary.

\section{Non-uniqueness example}\label{example}

\begin{prop}\label{nonunique}
Let $M$ be a hyperbolic 3-manifold with $\rank(\pi_1(M))=2$. Suppose that $M$
has a minimal length carrier graph $f\!:\!X\to M$ and that $M$ has a fixed-point
free isometry $h$ of finite order not divisible by 3. Then $hf$ is a minimal
length carrier graph not strongly equivalent to $f$.
\end{prop}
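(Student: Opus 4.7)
The first claim, that $hf$ is a minimal length carrier graph, is immediate: $h$ is an isometry so $\len_{hf}(X) = \len_f(X)$, and $(hf)_{\ast} = h_{\ast} \circ f_{\ast}$ is surjective because $h$ is a homeomorphism of $M$. The substance is the strong inequivalence of $f$ and $hf$, which I would approach by contradiction: assume $\eta\!:\!X \to X$ is a homeomorphism with $hf = f\eta$. The key preliminary observation is that $\eta$ has no fixed vertex---if $\eta(v) = v$ then $hf(v) = f(v)$ and $h$ would have a fixed point.

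Since a minimal length carrier graph is trivalent by~\cite{W}, and $\rank(\pi_1(X)) = 2$ forces $\chi(X) = -1$, the graph $X$ has exactly two vertices $u, v$ and three edges; so $X$ is either the theta graph (three bridges from $u$ to $v$) or the handcuff (one bridge plus a loop at each vertex). In either case $\eta$ must swap $u$ and $v$. Next I would examine $\eta$'s action on the bridge edges. If some bridge edge $e$ is left invariant as a set, then $\eta|_e$ is orientation-reversing, so $hf|_e = f|_e \circ \eta|_e$ runs the geodesic arc $f(e)$ from $f(v)$ to $f(u)$. Hence $h$ acts on $f(e)$ as the involution swapping its endpoints and fixing its midpoint, which contradicts that $h$ is fixed-point free. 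For the handcuff, the unique bridge is automatically invariant under a vertex swap, and this finishes that case without using the hypothesis $3 \nmid n$.

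The one remaining possibility is that $X$ is the theta graph and $\eta$ permutes the three edges as a $3$-cycle $\sigma$; this is precisely where $3 \nmid n$ must be used. Iterating $hf = f\eta$ and using $h^n = \mathrm{id}$ yields $f = f\eta^n$. If $n$ were odd, $\eta^n$ would still swap $u, v$, giving $f(u) = f(v)$ and again a fixed point of $h$, so $n$ must be even. Then $\eta^n$ fixes both vertices and acts on edges as $\sigma^n$, which is a nontrivial $3$-cycle since $\gcd(n, 3) = 1$. The equation $f = f\eta^n$ then forces $f|_{e_1}, f|_{e_2}, f|_{e_3}$ all to coincide, up to reparameterization, as the same oriented geodesic arc from $f(u)$ to $f(v)$. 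Consequently each generator $e_i \bar e_j$ of $\pi_1(X, u)$ is carried by $f_{\ast}$ to a null-homotopic loop (a path followed by its reverse), so $f_{\ast}$ is trivial, contradicting surjectivity of $f_{\ast}$ onto $\pi_1(M)$ of rank $2$. The main obstacle I expect is precisely this last case: extracting a genuinely nontrivial automorphism $\eta^n$ from the hypothesis and then verifying that the three edge paths are forced to agree as oriented paths up to reparameterization so that the null-homotopy is honest.
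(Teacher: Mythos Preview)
Your proof is correct and follows essentially the same strategy as the paper's. The one streamlining the paper achieves is to observe that $\eta$ can have no fixed point anywhere in $X$ (not merely no fixed vertex), which immediately rules out the handcuff graph and the invariant-edge case of the theta graph without your detour through the midpoint of the geodesic $f(e)$ in $M$; after that, both arguments iterate $hf=f\eta$ to $f=f\eta^{n}$ and finish the same way.
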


\begin{proof}
It is clear that $hf$ is a minimal length carrier graph. Suppose it is strongly
equivalent to $f$. Then there exists a homeomorphism $\eta\!:\!X\to X$ such
that $hf=f\eta$. Note that $\eta$ cannot fix any point $x\in X$, for then
$h$ would fix $f(x)$. According to~\cite{W}, minimal length carrier graphs must
be trivalent. There are only two trivalent graphs of rank 2: one that looks like
a $\theta$ and one that looks like eye-glasses. The eye-glasses graph does not
admit a fixed-point free homeomorphism; so $X$ is the $\theta$ graph. Up to
homotopy, $\eta$ must be the homeomorphism that swaps vertices and cyclically
permutes the edges.

Let $m$ be the order of $h$. Then $h^mf=f\eta^m$, which is equivalent to
$f=f\eta^m$. Since $m$ is not divisible by 3, $\eta^m$ cyclically permutes the
edges of $X$. Hence, $f$ must map each edge to the same image, which contradicts
$f$ being a carrier graph because $f_{\ast}(\pi_1(X))$ would be trivial.
\end{proof}

We can get concrete examples from this proposition. For example, let $M$ be the
figure 8 knot complement. Then $M$ is a two-fold cover of the Gieseking manifold,
hence it has a fixed-point
free isometry $h$ of order 2, and the rank of $\pi_1(M)$ is easily found to be two
(from, say, the Wirtinger presentation); so $M$ has non-unique minimal length
carrier graphs.

We can also get closed examples. Reid~\cite{R}
shows how to produce, for any $p>1$, a closed hyperbolic 3-manifold $M$ with
a regular, cyclic
cover $N$ of degree $p$ such that $\rank(\pi_1(N))=2$. If $p$
is not divisible by 3, then $N$ with its order $p$ deck transformation
satisfies the hypotheses of Proposition~\ref{nonunique} and thus has non-unique
minimal length carrier graphs.

We can take these examples a bit further to get examples of carrier graphs which
are minimal in the same equivalence class but are not strongly
equivalent. Reid's manifolds are formed as follows.
Let $\varphi$ be a pseudo-Anosov homeomorphism of a punctured torus $T$ and let
$M_{\varphi}$ be the mapping torus of $\varphi$. Let $a,b$ be generators of
$\pi_1(T)$. Reid forms a manifold, which we are calling $N$, by taking the obvious
$p$-fold cyclic cover of $M_{\varphi}$ and performing a certain Dehn filling on it.
It is shown that $N$ is a $p$-fold cyclic cover of a manifold obtained from
Dehn filling $M_{\varphi}$ and the preimage of the
 filling torus for $M_{\varphi}$ is the
filling torus of $N$ (in particular, the deck transformations of $N$ leave the
filling torus invariant).
By abuse of notation, we will use $a$ and $b$ to refer to the generators of the
fiber subgroup of $M_{\varphi}$ and its cover and to their images in the filled
manifold $N$. Reid shows that $a$ and $b$ generate $\pi_1(N)$. Let $H$ be the
filling torus of $N$. Then $N\setminus H$ is fiber bundle over $S^1$ with fiber
a compact surface of genus 1 and with 1 boundary component. Choose representatives
$\alpha$ and $\beta$ of $a$ and $b$, respectively, that lie in 
a particular fiber $\Sigma$ of $N\setminus H$. If $h$ is an order $p$
deck-transformation of $N$,
then $h\circ\alpha$ and $h\circ\beta$ are loops in the fiber $h(\Sigma)$ which also
generate $\pi_1(N)$. Notice that there is a submanifold homeomorphic to
$\Sigma\times[0,1]\subset N$ containing $\alpha$, $\beta$, $h\circ\alpha$ and $h\circ\beta$.
The manifold $\Sigma\times[0,1]$ is a genus 2 handlebody and the pairs
$\{\alpha,\beta\}$ and $\{h\circ\alpha,h\circ\beta\}$ each generate its
fundamental group. It is a well-known fact that any two minimal cardinality
generating sets for the fundamental group of a handlebody are Nielsen equivalent;
hence, $h$ preserves the Nielsen equivalence class of the generating pair
$\{a,b\}$.

Let $f\!:\!S^1\vee S^1\to N$ be the carrier graph given by mapping one of the
$S^1$s to $\alpha$ and the other to $\beta$, and let $f'$ be a carrier graph of
minimal length in the equivalence class of $f$. Then $h\circ f'$ has minimal
length in the equivalence class of the graph coming from $h\circ\alpha$ and
$h\circ\beta$. In~\cite{So}, Souto shows how to
associate an equivalence class of carrier graphs to a Nielsen equivalence class
of generators for $\pi_1$ and vice versa. His discussion of this correspondence
implies that since $h$ preserves the Nielsen equivalence class of $\{a,b\}$,
$f'$ and $h\circ f'$ are equivalent. However, Proposition~\ref{nonunique} implies
that these carrier graphs are not strongly equivalent. Hence, minimal length
carrier graphs are not unique even within an equivalence class.

\section{Weaker forms of uniqueness}\label{theorems}

For the proof of Theorem~\ref{main_thm}, we will need a lemma.

\begin{lem}\label{half_triangle}
Let $x$, $y$ and $z$ be distinct points in $\mathbb{H}^n$. Let $x'$ (resp. $y'$) be the
midpoint of the geodesic between $x$ (resp. $y$) and $z$. Then
$d(x',y')\le\frac12d(x,y)$, and equality is achieved exactly when the angle
$\angle xzy$ is 0 or $\pi$.
\end{lem}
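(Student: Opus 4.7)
The plan is to reduce the problem to $\mathbb{H}^2$ and then compute directly using the hyperbolic law of cosines. Since $x$, $y$, $z$ are three points of $\mathbb{H}^n$, they lie on a totally geodesic copy of $\mathbb{H}^2$, which also contains $x'$ and $y'$ (midpoints of geodesic segments between points of the plane). So without loss of generality we may assume $n=2$.

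Next I would introduce the notation $a = d(x,z)$, $b = d(y,z)$, $\theta = \angle xzy$, $c = d(x,y)$, and $c' = d(x',y')$, with $a,b > 0$ since the three points are distinct. Applying the hyperbolic law of cosines to the triangle $xzy$ gives
\[
\cosh c = \cosh a \cosh b - \sinh a \sinh b \cos\theta,
\]
while applying it to $x'zy'$ (with side lengths $a/2$ and $b/2$ and the same angle $\theta$ at $z$) gives
\[
\cosh c' = \cosh(a/2)\cosh(b/2) - \sinh(a/2)\sinh(b/2)\cos\theta.
\]
Since $\cosh$ is monotonic on $[0,\infty)$, it suffices to show $\cosh c \ge \cosh(2c')$, with equality iff $\sin\theta = 0$.

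The main step is then a routine algebraic computation: expand $\cosh(2c') = 2\cosh^2 c' - 1$, expand $\cosh c$ and $\sinh c$ via the half-angle (double-angle) identities $\cosh a = 2\cosh^2(a/2) - 1$ and $\sinh a = 2\sinh(a/2)\cosh(a/2)$, and subtract. The cross terms involving $\cos\theta$ cancel, the $\cos^2\theta$ term combines with $(\cosh^2(a/2)-1)(\cosh^2(b/2)-1) = \sinh^2(a/2)\sinh^2(b/2)$, and what remains simplifies to
\[
\cosh c - \cosh(2c') = 2\sinh^2(a/2)\sinh^2(b/2)\sin^2\theta.
\]
This is manifestly nonnegative, and since $a,b>0$, it vanishes exactly when $\sin\theta = 0$, i.e., $\theta \in \{0,\pi\}$, giving both the inequality and the equality case. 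The only real obstacle is bookkeeping in the half-angle expansion, but there is no conceptual difficulty; an alternative, slicker route would be to cite the $\mathrm{CAT}(-1)$ (hence strict $\mathrm{CAT}(0)$) inequality for $\mathbb{H}^n$, but the direct calculation has the advantage of pinpointing the equality case explicitly.
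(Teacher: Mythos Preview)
Your proof is correct and follows essentially the same route as the paper's: apply the hyperbolic law of cosines to both the large triangle $xzy$ and the half-scale triangle $x'zy'$, use $\cosh(2c')=2\cosh^2 c'-1$, and reduce the question to an algebraic identity in half-angle hyperbolic functions. Your version is in fact a bit cleaner, since you carry the computation through to the closed form $\cosh c-\cosh(2c')=2\sinh^2(a/2)\sinh^2(b/2)\sin^2\theta$, which makes both the inequality and the equality case immediate, whereas the paper stops at an equivalent inequality and checks the extremal case $\cos^2\gamma=1$ by hand.
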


\begin{proof}
Let $a=d(x',z)$, $b=d(y',z)$, $c=d(x',y')$ and $\gamma=\angle xzy$. We wish to
show that $2c\le d(x,y)$. This is equivalent to $\cosh(2c)\le\cosh(d(x,y))$. By
the hyperbolic law of cosines,
\begin{eqnarray*}
\cosh(c) &=& \cosh(a)\cosh(b)-\sinh(a)\sinh(b)\cos(\gamma) \\
\cosh d(x,y) &=& \cosh(2a)\cosh(2b)-\sinh(2a)\sinh(2b)\cos(\gamma).
\end{eqnarray*}

Now we just follow our noses: $\cosh(2c)=2\cosh^2(c)-1$, so we need to show
\[
2\cosh^2(c)-1\le \cosh(2a)\cosh(2b)-\sinh(2a)\sinh(2b)\cos(\gamma).
\]
The left side is equal to
\begin{multline*}
2\cosh^2(a)\cosh^2(b)-4\cosh(a)\cosh(b)\sinh(a)\sinh(b)\cos(\gamma)+\\
2\sinh^2(a)\sinh^2(b)\cos^2(\gamma)-1.
\end{multline*}
Notice that
\[
\sinh(2a)\sinh(2b)\cos(\gamma)=4\cosh(a)\cosh(b)\sinh(a)\sinh(b)\cos(\gamma).
\]
So our goal becomes
\[
2\cosh^2(a)\cosh^2(b)+2\sinh^2(a)\sinh^2(b)\cos^2(\gamma)-1\le\cosh(2a)\cosh(2b).
\]
Using the identity $\cosh(2x)=2\cosh^2(x)-1$ and some algebra, one can see that
this is equivalent to
\[
\sinh^2(a)\sinh^2(b)\cos^2(\gamma)+\cosh^2(a)+\cosh^2(b)\le\cosh^2(a)\cosh^2(b)+1.
\]
It suffices to prove this inequality with the assumption that
$\cos^2(\gamma)=1$, or equivalently, $\gamma=0,\pi$. In this case, it is an
equality, which follows from the identity $\cosh^2(b)=\sinh^2(b)+1$.
\end{proof}

\begin{proof}[Proof of Theorem~\ref{main_thm}]
Suppose $f$ and $g$ are homotopic and essentially distinct. Let
$H\!:\!X\times[0,1]\to M$ be a homotopy
from $f$ to $g$. The space $X\times[0,1]$ can be triangulated as follows. Let
$e\subset X$ be an edge. Suppose $e$ has distinct endpoints. Then a
homeomorphism from $e$ to $[0,1]$ can be extended to a homeomorphism from
$e\times[0,1]$ to $[0,1]\times[0,1]$ (sending $e$ to $[0,1]\times\{0\}$). The
latter space has a triangulation with two triangles obtained by splitting the
square along one of its diagonals. This triangulation can be pulled back to a
triangulation for $e\times[0,1]$. If $e$'s endpoints are the same (i.e. $e$ is a
loop), then it can be triangulated in essentially the same way, but with
$e\times\{0\}$ and $e\times\{1\}$ identified. These triangulations can be glued
together in an obvious way to yield a triangulation of $X\times[0,1]$. The map
$H$ can be made simplicially hyperbolic with respect to this triangulation, i.e.
it can be made to send edges to geodesic segments and 2-simplices to geodesic
triangles. We will assume this has been done. Note that this does not
change the ends of the homotopy ($f$ and $g$), since they already have geodesic
edges by virtue of having minimal length.

We now construct a new carrier graph $h\!:\!X\to M$. Let $v$ be a vertex of $X$.
Then $H$ maps $\{v\}\times[0,1]$ to a geodesic arc in $M$. Let $h(v)$ be the
midpoint of that geodesic. Having defined $h$ on the vertices on $X$, we can
define it on the edges. Let $e$ be an edge of $X$ with distinct endpoints $v$
and $w$. Then $e\times[0,1]$ consists of two triangles, which share a common
edge. Let $m$ be the midpoint of the (geodesic) image of that edge under $H$.
There are geodesic arcs $e_1$ and $e_2$ connecting $h(v)$ to $m$ and $m$ to
$h(w)$ and lying within $H(e\times[0,1])$. Let $h$ map $e$ homeomorphically to the
path formed by concatenating $e_1$ and $e_2$. See Figure~\ref{homo_e}. If $e$
has only one endpoint, then $h(e)$ is formed similarly; the picture is the same
as Figure~\ref{homo_e}, except that the left and right edges are identified.
Thus, the map $h$ is essentially the midpoint of the homotopy $H$. It is clear
that $h$ is homotopic to $f$ and $g$, which implies that it is a carrier graph
in the same equivalence class as $f$ and $g$.

\begin{figure}
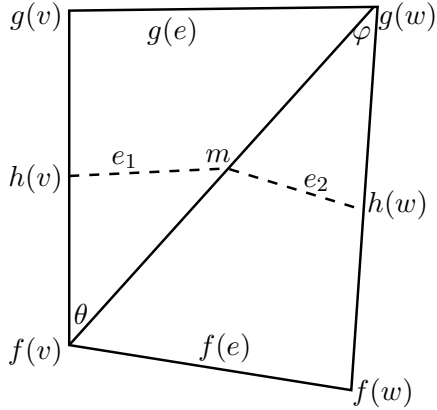

\begin{center}
\begin{lpic}{triangulated_rect(.5)}
\lbl[l]{35,12;$f(e)$}
\lbl[r]{35,96;$g(e)$}
\lbl[r]{-1,57;$h(v)$}
\lbl[l]{80,50;$h(w)$}
\lbl[l]{37,63;$m$}
\lbl[l]{12,62;$e_1$}
\lbl[l]{63,56;$e_2$}
\lbl[r]{-1,11;$f(v)$}
\lbl[l]{76,0;$f(w)$}
\lbl[r]{-1,100;$g(v)$}
\lbl[l]{83,100;$g(w)$}
\lbl[l]{2,21;$\theta$}
\lbl[l]{76,95;$\varphi$}
\end{lpic}
\caption{$H(e\times[0,1])$}\label{homo_e}
\end{center}
\end{figure}

We will show that $\len_h(X)<\frac12(\len_f(X)+\len_g(X))$. Because
$\len_f(X)=\len_g(X)$ (since they both have minimal length), $h$ will be shorter
than both, which will contradict minimality and complete the proof.
Still referring to Figure~\ref{homo_e}, we
can lift to $\hh$ and apply Lemma~\ref{half_triangle} to the left and right
triangles to get
\begin{equation}\label{edge_ineq}
\len_h(e)=\len(e_1)+\len(e_2)\le\frac12\len_g(e)+\frac12\len_f(e)
\end{equation}
with equality if and only if $\theta,\varphi\in\{0,\pi\}$. Summing over all edges,
we get $\len_h(X)\le\frac12(\len_f(X)+\len_g(X))$. In order for this to be a
strict inequality, we need for there to be at least one edge for
which~(\ref{edge_ineq}) is a strict inequality.

For any vertex $v\in X$, let $H_v=H(\{v\}\times[0,1])$. There must be some
vertex $v_0$ such that $H_v$ is not just a point. For otherwise, $f$ and $g$
would agree on the vertices of $X$ and for each edge $e$ of $X$, $f(e)$ and
$g(e)$ would be geodesic segments homotopic relative to their endpoints. Hence,
$f(e)$ would be the same as $g(e)$. If $e$ has distinct endpoints, then it is clear
that on $e$, $f$ and $g$ differ by an orientation preserving homeomorphism. If
$e$ is a loop, then perhaps $f$ and $g$ map $e$ to the same geodesic, but with
opposite orientation. Since $f$ and $g$ are homotopic (via a homotopy that does
not move the vertices), that would imply that the loop $f(e)$ is homotopic to its
inverse. Since $\pi_1(M)$ is torsion-free and simple loops in $X$ map to non-nullhomotopic
loops in $M$, this cannot happen. Therefore, $f$ and $g$ must be essentially
equivalent, which is a contradiction.
Note that if $H_v$ is not a single point, it is a geodesic path.

Pick an edge $e$ with (not necessarily distinct) endpoints $v$ and $w$, such that
$H_v$ is not a single point. If the inequality~(\ref{edge_ineq}) for $e$ is
strict, then we are done. If it is an equality, then the angles $\theta$ and
$\varphi$ must each be either 0 or $\pi$. This implies that the angle between
$H_v$ and $f(e)$ is either 0 or $\pi$.
The vertex $v$ must have some other edge $e'\ne e$ adjacent to it. In~\cite{W},
it is shown that the angle between any two edges sharing a vertex in a minimal length
carrier graph is $2\pi/3$. In particular, the angle between $f(e')$ and $f(e)$ is
$2\pi/3$. Thus, the angle between $f(e')$ and $H_v$ cannot be 0 or $\pi$, and so
for $e'$, the inequality~(\ref{edge_ineq}) must be strict. Hence, we get the
desired contradiction
\[
\len_h(X)<\frac12(\len_f(X)+\len_g(X))=\len_f(X).
\]
\end{proof}

In the examples following Proposition~\ref{nonunique}, we found that minimal
length carrier graphs
were not unique because we can compose them with ambient isometries to get new
carrier graphs. It is perhaps natural to wonder if any two minimal length
carrier graphs are related in this way (up to reparamaterizing their edges). If
this were true, then the well-known fact that for a large class of hyperbolic 3-manifolds
$M$, $\mathrm{Isom}(M)$ is finite, would imply Theorem~\ref{finite}, that there
are only finitely many minimal length carrier graphs. We will prove this theorem
directly using Theorem~\ref{main_thm} via the following proposition and then
prove the finiteness of isometry groups as a corollary in the next section.

\begin{prop}\label{prop}
Let $M$ be a finite volume hyperbolic 3-manifold and let $L>0$. There are only
finitely many carrier graphs which are minimal length within
their equivalence class and have length less than or equal to $L$.
\end{prop}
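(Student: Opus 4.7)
The plan is to show that there are only finitely many homotopy classes of carrier graphs $f\!:\!X\to M$ of length at most $L$ (for any fixed combinatorial type $X$) and then invoke Theorem~\ref{main_thm}, which guarantees that each such homotopy class contains at most one essentially distinct minimal-length-in-equivalence-class representative. White's theorem tells us that minimal length carrier graphs are trivalent, so only finitely many combinatorial types $X$ of rank $n=\rank(\pi_1(M))$ occur; fix one with $V$ vertices and $E$ edges.

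The first step is to confine the image of any such carrier graph to a compact set. Since $M$ has finite volume, write $M=M_{\mathrm{thick}}\cup C_1\cup\cdots\cup C_k$, where $M_{\mathrm{thick}}$ is compact and the $C_i$ are disjoint cusp neighborhoods with $\pi_1(C_i)\cong\mathbb{Z}^2$. If $f(X)$ missed $M_{\mathrm{thick}}$ entirely, then, being connected, it would lie in a single $C_i$, giving $f_\ast\pi_1(X)\subseteq\pi_1(C_i)\cong\mathbb{Z}^2$ and contradicting surjectivity onto $\pi_1(M)$. Since $\mathrm{diam}(f(X))\le EL$, it follows that $f(X)\subseteq K:=\overline{N_{EL}(M_{\mathrm{thick}})}$, a compact set.

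Next, fix a basepoint $\ast\in X$ and a maximal tree $T\subset X$; the $n$ non-tree edges give free generators $\sigma_1,\dots,\sigma_n$ of $\pi_1(X,\ast)$ realized by based loops traversing at most $2E$ edges of $X$. Lift $f(\ast)$ to some $\tilde p$ in a compact preimage $\tilde K\subset\hh$ of $K$; then each $\gamma_i:=f_\ast(\sigma_i)$ satisfies $d_{\hh}(\tilde p,\gamma_i\tilde p)\le 2EL$. Discreteness of $\pi_1(M)$-orbits in $\hh$ implies that only finitely many $g\in\pi_1(M)$ can satisfy $d(\tilde q,g\tilde q)\le 2EL$ for some $\tilde q\in\tilde K$, so each $\gamma_i$ lies in this finite set and $(\gamma_1,\dots,\gamma_n)$ ranges over a finite subset of $\pi_1(M)^n$. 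Modding out by simultaneous conjugation (to absorb the ambiguity in the choice of lift) gives only finitely many homomorphisms $\pi_1(X)\to\pi_1(M)$ up to conjugation.

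Since $X$ is aspherical, unbased homotopy classes of maps $X\to M$ correspond bijectively to $\mathrm{Hom}(\pi_1(X),\pi_1(M))$ modulo conjugation, so only finitely many homotopy classes of length-$\le L$ carrier graphs of type $X$ arise; Theorem~\ref{main_thm} then concludes the argument. The main obstacle will be the compactness claim in the second paragraph: one has to rule out all the vertices simultaneously drifting deep into a cusp, which relies on the fact that a connected subset of a single cusp can carry at most the $\mathbb{Z}^2$ subgroup of $\pi_1(M)$.
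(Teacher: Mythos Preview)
Your argument is correct and shares the paper's overall skeleton---reduce to finitely many trivalent graph types via White, trap the image in a compact set by the cusp/$\mathbb{Z}^2$ argument, and then invoke Theorem~\ref{main_thm}---but the middle step is genuinely different. The paper argues by contradiction: assuming infinitely many essentially distinct minimal representatives, it observes that the bounded total length makes the family equicontinuous with images in a common compact set, applies Arzel\`a--Ascoli to extract a uniformly convergent subsequence, and concludes that two nearby members must be homotopic, contradicting Theorem~\ref{main_thm}. You instead count homotopy classes directly: lifting a basepoint to a fixed compact $\tilde K\subset\hh$ and using proper discontinuity of the deck action to bound the number of possible images of a free generating set, you get a finite bound on $\mathrm{Hom}(\pi_1(X),\pi_1(M))/\text{conj}$ realized by short carrier graphs, and hence (via asphericity) on the relevant homotopy classes. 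Your route is slightly more explicit---in principle it yields an effective bound in terms of $L$ and the geometry of $M_{\mathrm{thick}}$---and it sidesteps the equicontinuity and function-space compactness considerations; the paper's route is a bit quicker to write down. Two cosmetic remarks: since $\len_f(X)\le L$ is the \emph{total} length, you actually have $\mathrm{diam}(f(X))\le L$ and loop-length $\le 2L$, so your bounds $EL$ and $2EL$ are valid but looser than necessary; and the existence of a compact $\tilde K\subset\hh$ surjecting onto $K$ follows, for instance, by taking a closed ball about a fixed lift of radius $\mathrm{diam}_M(K)$.
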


\begin{proof}
Suppose $M$ has an infinite sequence of carrier graphs $f_i\!:\!X\to M$, each
of minimal length within its equivalence class and each with length less than or
equal to $L$.
Being minimal length implies the graphs are trivalent. There are only finitely
many trivalent graphs of a particular rank; so we may pass to a subsequence and
assume that every $X_i$ is homeomorphic to a particular graph $X$. We will continue
to call this sequence $f_i$. Since the $f_i$ have bounded length and a carrier
graph cannot be contained in a cusp (because this would imply that $\pi_1(M)$
is a quotient of the cusp group $\mathbb{Z}^2$), there is a bound on how deep
into a cusp neighborhood the image of any $f_i$ may penetrate. Hence, the $f_i$
all map into one compact subset of $M$. Additionally, the bound on the length
of the $f_i$ implies that the sequence is equicontinuous.
We can now apply the Arzel\`a-Ascoli theorem to get that a subsequence of $\{f_i\}$
converges uniformly. Therefore, for some large $i$ and $j$, $f_i$ is sufficiently
close to $f_j$ that the two maps must be homotopic. This contradicts
Theorem~\ref{main_thm}.
\end{proof}

\begin{proof}[Proof of Theorem~\ref{finite}]
Let $M$ be a finite volume hyperbolic 3-manifold. Let $\mathcal{C}$ be the set
of minimal length carrier graphs for $M$, and let $L$ be the length of any element
of $\mathcal{C}$. Elements of $\mathcal{C}$ clearly have minimal length within
their equivalence classes. Thus, $\mathcal{C}$ is contained in the set of
carrier graphs which are of minimal
length in their equivalence classes and have length less than or equal to $L$. The
latter set is finite, by virtue of Proposition~\ref{prop}.

Similarly, the set of carrier graphs of minimal length within a particular
equivalence class is seen to be finite by letting $L$ be the length of any
minimal length representative and applying Proposition~\ref{prop} in the same way.
\end{proof}

\section{An application to isometry groups}\label{sec:application}

We will now give a new proof of the previously known result that a hyperbolic
3-manifold that does not have a simply-degenerate, $\pi_1$-surjective NP-end
has finite isometry group. The proof is simple and follows from Theorem~\ref{finite}
and basic facts about minimal length carrier graphs. It requires knowing that
such manifolds have minimal length carrier graphs, which was proved in~\cite{S}.
That proof relies on the proof of the tameness theorem by Agol and Calegari-Gabai.
However, the tameness theorem is not needed for the case in which the 3-manifold
is geometrically finite. Thus, when the following theorem is restricted to
geometrically finite 3-manifolds, its proof is entirely elementary.

\begin{cor}[Corollary 1]
If $M$ does not have a simply-degenerate, $\pi_1$-surjective NP-end, then
$\mathrm{Isom}(M)$ is finite.
\end{cor}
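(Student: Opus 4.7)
The plan is to let $\mathrm{Isom}(M)$ act on the finite set furnished by Theorem~\ref{finite} and to show that every point stabilizer is finite; since a group acting with finite orbits and finite stabilizers on a finite set is itself finite, this will prove the corollary. Let $\mathcal{C}$ denote the set of essential equivalence classes of globally minimal length carrier graphs for $M$, which is finite by Theorem~\ref{finite}.

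The action is by post-composition. If $h\in\mathrm{Isom}(M)$ and $f\!:\!X\to M$ is a carrier graph of minimal length, then $h\circ f$ has the same length, and $(h\circ f)_{\ast}=h_{\ast}\circ f_{\ast}$ is surjective since $h_{\ast}$ is an automorphism of $\pi_1(M)$; hence $h\circ f$ is again a minimal length carrier graph. If $f_1=f_2\eta$ for some vertex-fixing, edge-orientation-preserving $\eta$, then $hf_1=(hf_2)\eta$, so this action descends to a well-defined action of $\mathrm{Isom}(M)$ on $\mathcal{C}$.

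Next I would bound the stabilizer of a fixed class $[f]\in\mathcal{C}$. If $h$ stabilizes $[f]$, then $h\circ f=f\circ\eta$ for some $\eta$ that fixes vertices and preserves edges together with their orientations. In particular $h$ fixes $f(v)$ for every vertex $v$ of $X$ and preserves the initial tangent direction of $f(e)$ at $f(v)$ for every incident edge $e$. By the trivalence of minimal length carrier graphs and the $2\pi/3$ angle condition at their vertices (White~\cite{W}, as already invoked in Proposition~\ref{nonunique}), the three emanating tangent directions at $f(v)$ span a $2$-dimensional subspace of $T_{f(v)}M$, which $h$ must then fix pointwise. An isometry of a hyperbolic $3$-manifold that pointwise fixes a tangent $2$-plane is either the identity or the reflection across the corresponding totally geodesic surface through $f(v)$, so the stabilizer of $[f]$ has order at most $2$. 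Combined with the finiteness of $\mathcal{C}$, this gives $|\mathrm{Isom}(M)|<\infty$.

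I do not anticipate any serious obstacle: the only substantive ingredient beyond Theorem~\ref{finite} is the trivalent-with-$2\pi/3$ structure of minimal length carrier graphs, which the paper has already cited from White~\cite{W}. If one wished to replace the ``at most $2$'' bound with ``trivial,'' one would need to rule out the reflection case (for example by invoking orientability of $M$ or by using a second vertex whose corresponding reflection plane is incompatible), but this refinement is not needed for finiteness.
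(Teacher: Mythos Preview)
Your argument is correct and matches the paper's approach: let $\mathrm{Isom}(M)$ act on the finite set $\mathcal{C}$ of essential equivalence classes of minimal length carrier graphs, and use the trivalent $2\pi/3$ structure from~\cite{W} to pin down the differential of an isometry in a point stabilizer. The one omission is that you should record that $\mathcal{C}\ne\varnothing$ (this is exactly where the hypothesis on NP-ends enters, via~\cite{S}), since your orbit--stabilizer bound $|\mathrm{Isom}(M)|\le 2\,|\mathcal{C}|$ is vacuous on the empty set.
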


\begin{proof}
Let $\mathcal{C}$ be the set of essential equivalence classes of minimal length
carrier graphs in $M$. By Theorem~1 of~\cite{S}, this set is nonempty, and
by Theorem~\ref{finite}, this set is finite.
It is clear that $\mathrm{Isom}(M)$ acts on $\mathcal{C}$,
which gives a map from $\mathrm{Isom}(M)$ to the finite group of permutations
of $\mathcal{C}$. Let $K$ be the kernel of this map. It suffices to show that
$K$ is finite. Isometries of $M$ that are in $K$ fix a minimal length carrier
graph $f\!:\!X\to M$ up to essential equivalence. In particular, for some vertex $v\in X$,
they fix $f(v)$ and permute the images of the three edges attached to $v$. This gives
a map from $K$ to $S_3$, the permutation group on three elements. An element $h$ of
the kernel of this map would fix $f(v)$ and the three tangent vectors at $f(v)$
corresponding to the three edges of $X$ attached to $v$. Since the angles between
these edges are all $2\pi/3$, these tangent vectors span a plane in the tangent
space. Lifting to $\hh$, $\tilde{h}$ would fix some preimage of $f(v)$ and fix a hyperplane
going through $f(v)$ pointwise (since it is an isometry and fixes the tangent plane).
Hence, $h$ must be the identity map. Thus, $K$ injects into $S_3$, which means
that $K$ is finite. Therefore, $\mathrm{Isom}(M)$ is finite.
\end{proof}

\noindent\textsc{Department of Mathematics, Statistics, and Computer Science (M/C 249),
University of Illinois at Chicago, 851 S. Morgan St., Chicago, IL 60607-7045}\\

\noindent\emph{E-mail address}: \texttt{michael.siler@gmail.com}


\begin{thebibliography}{99}

\bibitem{B} Ian Biringer, \emph{Geometry and rank of fibered hyperbolic
3-manifolds}, Algebraic \& Geometric Topology 9 (2009), 277-292.
\bibitem{BS} Ian Biringer and Juan Souto, \emph{A finiteness theorem for
hyperbolic 3-manifolds}. arXiv:0901.0300
\bibitem{NS} Hossein Namazi and Juan Souto, \emph{Heegaard splittings and
pseudo-Anosov maps}, Geom. Funct. Anal. Vol. 19 (2009), 1195-1228.
\bibitem{R} Alan W. Reid, \emph{Some remarks on 2-generator hyperbolic
3-manifolds}, Discrete Groups and Geometry, London Math. Soc. Lecture Note
Ser. 173, Cambridge Univ. Press, Cambridge (1992), 209-219.
\bibitem{S} Michael Siler, \emph{Lengths of edges in carrier graphs},
Geometriae Dedicata, Doi: 10.1007/s10711-011-9689-6
\bibitem{So} Juan Souto, \emph{The rank of the fundamental group of certain
hyperbolic 3-manifolds fibering over the circle}, Geometry \& Topology Monographs
14 (2008), 505-518.
\bibitem{T} William P. Thurston, \emph{The geometry and topology of 3-manifolds},
http://library.msri.org/books/gt3m
\bibitem{W} Matthew White, \emph{Injectivity radius and fundamental groups of
hyperbolic 3-manifolds}, Comm. Anal. Geom. 10 (2002), no. 2, 377-395. MR1900756
(2003c:57019)

\end{thebibliography}
\end{document}